\newtheorem{theorem}{\sc Theorem}
\newtheorem{lemma}{\sc Lemma}
\newtheorem{propn}{\sc Proposition}
\newtheorem{cor}{\sc Corollary}
\title{Quartic graphs with every edge in a triangle}
\author{Florian Pfender}
\address{Department of Mathematics and Statistics, University of Colorado Denver}
\author{Gordon F. Royle}
\address{Centre for the Mathematics of Symmetry and Computation \\ School of Mathematics and Statistics \\ University of Western Australia}
\subjclass{Primary 05C75; Secondary 05C38}
\begin{document}

\begin{abstract}
We characterise the  quartic (i.e. $4$-regular) multigraphs with the property that every edge lies in a triangle. The main result is that such graphs are either squares of cycles, line multigraphs of cubic multigraphs, or are obtained from these by a number of simple subgraph-replacement
operations. A corollary of this is that a simple quartic graph with every edge in a triangle is either the square of a cycle, the line graph of a 
cubic graph or a graph obtained from the line multigraph of a cubic multigraph by replacing triangles with copies of $K_{1,1,3}$.
\end{abstract}

\maketitle

\section{Introduction}

One of the most fundamental properties of a graph (see Diestel \cite{Diestel} for general background in graph theory) is whether it contains triangles, and if so, whether it has many triangles or few triangles, and many authors have studied classes of graphs that are extremal in some sense with respect to their triangles. While studying an unrelated 
graphical property, the authors were led to consider the class of regular graphs with the extremal property that {\em every edge} lies in a triangle; a property that henceforth we denote {\em the triangle property}. Clearly a disconnected graph has the triangle property if and only if each of its connected components does, and so it suffices to consider only connected graphs.

Although it seems impossible to characterise regular graphs of arbitrary degree with the triangle property, there are considerable structural restrictions on a graph with the triangle property when the degree is sufficiently low. When the degree is $2$ or $3$, the problem is trivial with $K_3$ and $K_4$ being the sole connected examples. When the degree is $4$ there are infinitely many graphs with the triangle property, but the restrictions are so strong that we can obtain a precise structural description of the class of 
connected quartic {\em multigraphs} with the triangle property.

To state the result, we first need two basic families of quartic multigraphs with the triangle property.
 The {\em squared $n$-cycle} $C_n^2$ is usually
defined to be the graph obtained from the cycle $C_n$ by adding an edge between each pair of vertices at distance $2$. However for our purposes,  we want to be more precise about multiple edges, and so for $n \geq 3$, we define $C_n^2$ as the Cayley {\em multigraph}\footnote{Thus the ``connection set'' of the Cayley graph is viewed as a multiset.} ${\rm Cay}(\mathbb{Z}_n, \{\pm 1, \pm 2\}).$ For $n=3$ and $n=4$, this creates graphs with multiple edges (see Figure~\ref{fig:squaredncyc}) but for $n \geq 5$ the graph is simple and either definition suffices. Inspection of Figure~\ref{fig:squaredncyc} makes it clear that in all cases the graph is a quartic multigraph with the triangle property.

\begin{figure}
\begin{center}
\begin{tikzpicture}[bend angle = 15]
\tikzstyle{vertex}=[circle, fill=gray, draw=black,inner sep=0.7mm]
\node [vertex] (w0) at (0:1cm) {};
\node [vertex] (w1) at (120:1cm) {};
\node [vertex] (w2) at (240:1cm) {};
\draw [thick,bend right] (w0) to (w1);
\draw [thick,bend left] (w0) to (w1);
\draw [thick,bend right] (w1) to (w2);
\draw [thick,bend left] (w1) to (w2);
\draw [thick,bend right] (w2) to (w0);
\draw [thick,bend left] (w2) to (w0);
\pgftransformxshift{4cm}
\node [vertex] (x0) at (45:1.2cm) {};
\node [vertex] (x1) at (135:1.2cm) {};
\node [vertex] (x2) at (225:1.2cm) {};
\node [vertex] (x3) at (315:1.2cm) {};
\draw [thick] (x0)--(x1)--(x2)--(x3)--(x0);
\draw [thick,bend right] (x0) to (x2);
\draw [thick,bend left] (x0) to (x2);
\draw [thick,bend right] (x1) to (x3);
\draw [thick,bend left] (x1) to (x3);
\pgftransformxshift{4.5cm}
\node [vertex] (v0) at (0:1.5cm) {};
\node [vertex] (v1) at (45:1.5cm) {};
\node [vertex] (v2) at (90:1.5cm) {};
\node [vertex] (v3) at (135:1.5cm) {};
\node [vertex] (v4) at (180:1.5cm) {};
\node [vertex] (v5) at (225:1.5cm) {};
\node [vertex] (v6) at (270:1.5cm) {};
\node [vertex] (v7) at (315:1.5cm) {};
\draw [thick] (v0)--(v1)--(v2)--(v3)--(v4)--(v5)--(v6)--(v7)--(v0);
\draw [thick] (v0)--(v2)--(v4)--(v6)--(v0);
\draw [thick] (v1)--(v3)--(v5)--(v7)--(v1);
\end{tikzpicture}
\end{center}
\caption{Squared cycles for $n=3$, $n=4$ and $n=8$.}
\label{fig:squaredncyc}
\end{figure}
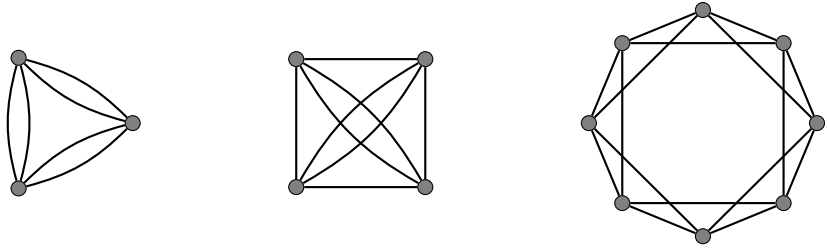

The second basic family is the family of {\em line multigraphs} of cubic multigraphs. For a multigraph $G$, we define the line multigraph $L(G)$ to have the edges of $G$ as its vertices, and where two edges of $G$ are connected by $k$ edges in $L(G)$ if they are mutually incident to $k$ vertices in $G$. In particular, if $e$ and $f$ are parallel edges in $G$, then there is a double edge in $L(G)$ between the vertices corresponding to $e$ and $f$; an example is shown in Figure~\ref{fig:linegraph}. If $G$ is a cubic multigraph, then $L(G)$ is a quartic multigraph. The edge set of $L(G)$ can be partitioned into cliques, with each vertex of degree $d$ in $G$ corresponding to a clique of size $d$ in $L(G)$. Therefore if $G$ is a {\em cubic} multigraph, the edge set of $L(G)$ can be partitioned into triangles, showing in a particularly strong way that $L(G)$ has the triangle property.

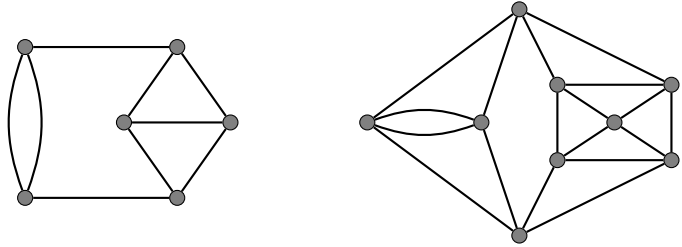
\begin{figure}
\begin{center}
\begin{tikzpicture}[bend angle = 20]
\tikzstyle{vertex}=[circle, fill=gray, draw=black,inner sep=0.7mm]
\node [vertex] (v0) at (0,0) {};
\node [vertex] (v1) at (0,2) {};
\node [vertex] (v2) at (2,2) {};
\node [vertex] (v3) at (2.7,1) {};
\node [vertex] (v4) at (2,0) {};
\node [vertex] (v5) at (1.3,1) {};
\draw [thick, bend right] (v0) to (v1);
\draw [thick, bend left] (v0) to (v1);
\draw [thick] (v0)--(v4)--(v5)--(v2)--(v1);
\draw [thick] (v3)--(v5);
\draw [thick] (v2)--(v3)--(v4);
\pgftransformxshift{4.5cm}
\pgftransformyshift{-0.5cm}
\node [vertex] (v0) at (0,1.5) {};
\node [vertex] (v1) at (1.5,1.5) {};
\node [vertex] (v2) at (2,0) {};
\node [vertex] (v3) at (2,3) {};
\node [vertex] (v4) at (2.5,1) {};
\node [vertex] (v5) at (2.5,2) {};
\node [vertex] (v6) at (3.25,1.5) {};
\node [vertex] (v7) at (4,1) {};
\node [vertex] (v8) at (4,2) {};
\draw [thick, bend right] (v0) to (v1);
\draw [thick, bend left] (v0) to (v1);
\draw[thick] (v0)--(v3);
\draw[thick] (v1)--(v3);
\draw [thick](v0)--(v2);
\draw [thick](v1)--(v2);
\draw [thick](v3)--(v5);
\draw [thick](v3)--(v8);
\draw [thick](v2)--(v4);
\draw [thick](v2)--(v7);
\draw [thick](v4)--(v5)--(v8)--(v7)--(v4);
\draw [thick](v4)--(v6)--(v8);
\draw [thick](v5)--(v6)--(v7);
\end{tikzpicture}
\end{center}
\caption{A cubic multigraph and its line multigraph}
\label{fig:linegraph}
\end{figure}

\begin{figure}
\begin{center}
\begin{tikzpicture}[bend angle = 15]
\tikzstyle{every node}=[circle, fill=gray, draw=black,inner sep=0.7mm];
\node (x) at (0,1) {};
\node (y) at (2,1) {};
\node (u) at (1,0) {};
\node (v1) at (0,-1) {};
\node (v2) at (2,-1) {};
\draw[style=thick] (v2)--(u)--(x)--(v1);
\draw[style=thick] (v2)--(y)--(u)--(v1);
\draw[style=thick] (v2) to [bend left] (v1);
\draw[style=thick] (v1) to [bend left] (v2);
\draw[style=thick, bend left] (x) to (y);
\draw[style=thick, bend left] (y) to (x);
\end{tikzpicture}
\end{center}
\caption{A $5$-vertex quartic multigraph with the triangle property}
\label{fig:5ex}
\end{figure}
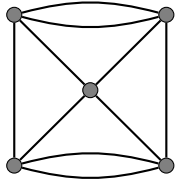

There are certain {\em subgraph-replacement} operations that can be performed on a graph while preserving the triangle property. A triangle $T$ (viewed as a set of three edges) is called {\em eligible} if it can be removed without destroying the triangle property, or if one of the three edges belongs to a triple edge. The first two operations apply to graphs with eligible triangles; in each an eligible triangle $T$ is removed, leaving the vertices and any other edges connecting them, and a new subgraph is attached in a specific way to the vertices of the removed triangle. 
Operation 1 replaces the removed edges with paths of length two, and joins the three new vertices with a new triangle, while Operation 2 replaces the triangle with a copy of $K_{1,1,3}$. Figure~\ref{fig:ops12} depicts these operations. 

\begin{figure}
\begin{center}
\begin{tikzpicture}[scale=0.8]
\tikzstyle{every node}=[circle, fill=gray, draw=black,inner sep=0.7mm];
\node[label=below left:$x$] (x) at (0,0) {};
\node[label=above left:$y$] (y) at (0,2) {};
\node[label=right:$z$] (z) at (2,1) {};
\draw[style=thick] (x)--(y)--(z)--(x);
\draw [->] (3,1)--(4,1);
\pgftransformxshift{5cm}
\node[label=below left:$x$] (x) at (0,0) {};
\node[label=above left:$y$] (y) at (0,2) {};
\node[label=right:$z$] (z) at (2,1) {};
\node (u) at (0,1) {};
\node (v) at (1,0.5) {};
\node (w) at (1,1.5) {};
\draw[style=thick] (x)--(u)--(y)--(w)--(z)--(v)--(x);
\draw[style=thick] (u)--(v)--(w)--(u);

\pgftransformxshift{-5cm}
\pgftransformyshift{-4cm}
\node[label=below left:$x$] (x) at (0,0) {};
\node[label=above left:$y$] (y) at (0,2) {};
\node[label=right:$z$] (z) at (2,1) {};
\draw[style=thick] (x)--(y)--(z)--(x);
\draw [->] (3,1)--(4,1);
\pgftransformxshift{5cm}
\node[label=below left:$x$] (x) at (0,0) {};
\node[label=above left:$y$] (y) at (0,2) {};
\node[label=right:$z$] (z) at (2,1) {};
\node (u) at (1,1.4) {};
\node (v) at (1,0.6) {};
\draw[style=thick] (v)--(x)--(u)--(y)--(v)--(z)--(u)--(v);
\end{tikzpicture}
\end{center}
\caption{Operations $1$ and $2$ where $\{xy, yz, zx\}$ is an eligible triangle}
\label{fig:ops12}
\end{figure}
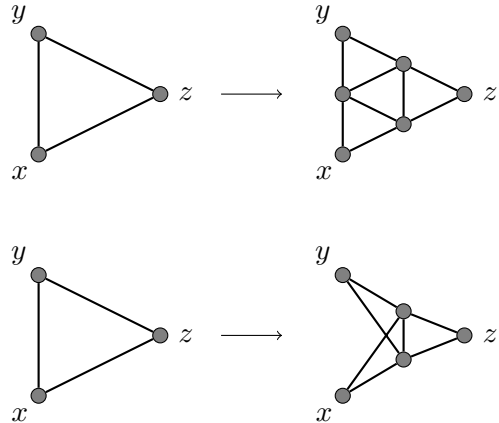

The third and fourth operations replace specific subgraphs with larger subgraphs, and are best described by pictures --- see Figure~\ref{fig:ops34} --- rather than in words. The named vertices are the points of attachment of the subgraph to the remainder of the graph and remain unchanged. In Operation 3, the left-hand subgraph is necessarily an induced subgraph, but in Operation 4 it is possible that $x$ and $y$ are connected by a double edge, in which case the left-hand subgraph is the squared $4$-cycle and the right-hand graph is the graph shown in Figure~\ref{fig:5ex}. It is not possible for $x$ and $y$ to be connected by a single edge (see Lemma~\ref{twodegthree} below). 
 
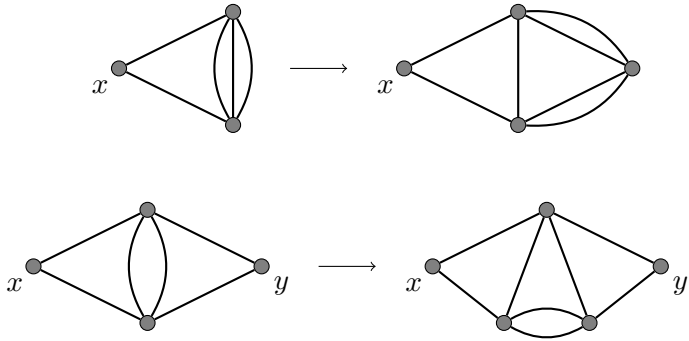
\begin{figure}
\begin{center}
\begin{tikzpicture}[scale=0.75]
\tikzstyle{every node}=[circle, fill=gray, draw=black,inner sep=0.7mm];
\node (x) at (0,0) {};
\node (y) at (0,2) {};
\node[label=below left:$x$] (v) at (-2,1) {};
\draw[style=thick] (x)--(y)--(v)--(x);
\draw[style=thick] (y) to [bend left] (x);
\draw[style=thick] (x) to [bend left] (y);
\draw [->] (1,1)--(2,1);
\pgftransformxshift{5cm}
\node[label=below left:$x$] (v) at (-2,1) {};
\node (x) at (0,0) {};
\node (y) at (0,2) {};
\node (z) at (2,1) {};
\draw[style=thick] (x)--(y)--(z)--(x)--(v)--(y);
\draw[style=thick] (y) to [bend left] (z);
\draw[style=thick] (z) to [bend left] (x);
\pgftransformxshift{-8.5cm}
\pgftransformyshift{-2.5cm}
\node[label=below left:$x$] (x) at (0,0) {};
\node[label=below right:$y$] (y) at (4,0) {};
\node (u) at (2,1) {};
\node (v) at (2,-1) {};
\draw[style=thick] (u)--(x)--(v)--(y)--(u);
\draw[style=thick] (v) to [bend left] (u);
\draw[style=thick] (v) to [bend right] (u);
\draw [->] (5,0)--(6,0);
\pgftransformxshift{7cm}
\node[label=below left:$x$] (x) at (0,0) {};
\node[label=below right:$y$] (y) at (4,0) {};
\node (u) at (2,1) {};
\node (v1) at (1.25,-1) {};
\node (v2) at (2.75,-1) {};
\draw[style=thick] (v2)--(u)--(x)--(v1);
\draw[style=thick] (v2)--(y)--(u)--(v1);
\draw[style=thick] (v2) to [bend left] (v1);
\draw[style=thick] (v1) to [bend left] (v2);
\end{tikzpicture}
\end{center}
\caption{Operations 3 and 4}
\label{fig:ops34}
\end{figure}

The final operation, which is shown in Figure~\ref{fig:op5}, {\em decreases} the number of vertices, and is used only to create triple edges. The
named vertex is the point of attachment of this subgraph to the graph. In this operation, both sides are necessarily induced blocks (that is, maximal $2$-connected subgraphs) of their
respective graphs.


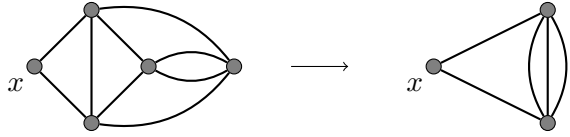
\begin{figure}
\begin{center}
\begin{tikzpicture}[scale=0.75]
\tikzstyle{every node}=[circle, fill=gray, draw=black,inner sep=0.7mm];
\node [label=below left:$x$] (a) at (0,1) {};
\node (b) at (1,0) {};
\node (c) at (1,2) {};
\node (d) at (2,1) {};
\node (e) at (3.5,1) {};
\draw [thick] (a)--(b)--(c)--(a);
\draw [thick] (c)--(d)--(b);
\draw [thick, bend right] (d) to (e);
\draw [thick, bend left] (d) to (e);
\draw [thick, bend left] (c) to (e);
\draw [thick, bend right] (b) to (e);
\draw [->] (4.5,1)--(5.5,1);
\pgftransformxshift{9cm}
\node (x) at (0,0) {};
\node (y) at (0,2) {};
\node[label=below left:$x$] (v) at (-2,1) {};
\draw[style=thick] (x)--(y)--(v)--(x);
\draw[style=thick] (y) to [bend left] (x);
\draw[style=thick] (x) to [bend left] (y);
\end{tikzpicture}
\end{center}
\caption{Operation 5, which creates a triple edge}
\label{fig:op5}
\end{figure}

Finally we can state the main theorem of the paper.

\begin{theorem}\label{mainthm}
If $G$ is a connected $4$-regular multigraph with the triangle property, then at least one of the
following holds:
\begin{enumerate}
\item $G$ is the square of a cycle of length at least $3$, or
\item $G$ is the $5$-vertex multigraph shown in Figure~\ref{fig:5ex}, which is obtained by applying Operation $4$ once to the squared $4$-cycle, or 
\item $G$ can be obtained from the line multigraph of a cubic multigraph by a sequence of applications of Operations $1$--$5$.
\end{enumerate}
\end{theorem}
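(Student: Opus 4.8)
The plan is to argue by induction, reversing the five operations until we reach one of the two irreducible families. Concretely, I would show that a connected $4$-regular multigraph $G$ with the triangle property is either a squared cycle, or the $5$-vertex multigraph of Figure~\ref{fig:5ex}, or contains a subgraph that is the output (right-hand side) of one of Operations~1--5; in the last case, reversing that operation yields a connected $4$-regular multigraph $G'$ that still has the triangle property, to which the inductive hypothesis applies, whence re-applying the operation shows that $G$ too is of the required form. Because Operation~5 \emph{decreases} the number of vertices when applied forwards (so that reversing it increases the vertex count), plain induction on $|V(G)|$ is not quite enough; instead I would induct on the weighted quantity $|V(G)| + c\cdot t(G)$, where $t(G)$ is the number of triple edges and $c>2$ is a constant, noting that reversing Operation~5 removes a triple edge while reversing Operations~1--4 strictly decreases $|V(G)|$ without creating triple edges, so that the measure drops in every case.

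The engine is a local analysis of the \emph{link} of a vertex $v$: the multigraph $L_v$ on the four neighbours of $v$ (taken with multiplicity) in which an edge records a triangle through $v$. The triangle property says precisely that $L_v$ has no isolated vertex, and on four vertices this leaves only a short list of possibilities. Two dominate: the perfect matching $2K_2$, in which the four edges at $v$ split into two edge-disjoint triangles, and the path $P_4$, in which the neighbours occupy a consecutive arc. These are exactly the local pictures of the line multigraph of a cubic multigraph and of a squared cycle respectively; every other link---one carrying a chord, a double or triple edge at $v$, or a denser configuration---turns out to be the local fingerprint of one of the operation gadgets, or of the small squared cycles $C_3^2$ and $C_4^2$.

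From here the argument forks. If every link is the matching $2K_2$, then each edge lies in exactly one triangle, so $E(G)$ partitions into triangles meeting only at vertices with each vertex in exactly two of them; the standard correspondence then exhibits $G$ as the line multigraph $L(H)$ of the cubic multigraph $H$ whose vertices are these triangles and whose edges are the vertices of $G$. If instead some link is a path $P_4$, I would show that the consecutive-arc structure propagates from neighbour to neighbour and, using connectivity and $4$-regularity, closes up to force $G\cong C_n^2$ (with short arcs producing the multi-edged cases $C_3^2$, $C_4^2$). In the remaining, genuinely mixed, situation some vertex exhibits a denser link or sits on a multiple edge; I would isolate a minimal such configuration, match it against the right-hand sides of Operations~1--5---with the double-edge subcase of Operation~4 accounting for Figure~\ref{fig:5ex}, and Lemma~\ref{twodegthree} excluding the single-edge variant---reverse it, and invoke induction.

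I expect the reduction step in the mixed case to be the main obstacle: the case analysis is large, and it must be organised so that \emph{every} graph outside the two base families provably contains a reversible gadget, that reversing it preserves both $4$-regularity and the triangle property, and that the weighted measure always strictly decreases. Multiple edges are the chief source of difficulty. Triple edges arise only through Operation~5 and carry the block-structure constraint recorded there, so one must verify that a triple edge always sits inside a block matching the right-hand side of that operation; double edges force careful tracking of which triangles remain \emph{eligible} (in the sense fixed before Operations~1--2) after a removal. Much of the real work therefore lies in showing that eligible triangles exist whenever the argument calls for one and that the five operations are collectively exhaustive.
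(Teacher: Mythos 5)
Your overall architecture (reduce whenever a right-hand-side gadget is present, classify the irreducible graphs) is the paper's, but your inductive engine has a genuine flaw. The claim that ``reversing Operations 1--4 strictly decreases $|V(G)|$ without creating triple edges'' is false for Operation~3: its \emph{left}-hand side is exactly the triangle-with-triple-edge block, so reversing Operation~3 deletes one vertex and \emph{creates} a triple edge, changing your measure $|V(G)|+c\cdot t(G)$ by $c-1>1$, an increase. Worse, the procedure can cycle: reversing Operation~5 replaces the triple-edge block by the five-vertex block on the left of Figure~\ref{fig:op5}, and that block contains a $K_{1,1,3}$ (its two interior degree-$4$ vertices are adjacent to each other and to the other three vertices), so a subsequent reversal of Operation~2 there recreates the triple-edge block exactly; since these two moves are mutually inverse, with $\Delta = 2-c$ and $c-2$, no measure of your form decreases under both, and the induction as organised does not terminate. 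The paper sidesteps this entirely by \emph{never reversing Operation~5}: it reverses only Operations 1--4, each of which strictly decreases $|V(G)|$, so plain induction on the vertex count suffices, and any triple edges present in the irreducible graph $G'$ are shown (Claim~4 of Proposition~\ref{mainpart}) to arise from applying Operation~5 \emph{forwards} to a line multigraph. Your scheme is repairable in the same way---drop Operation~5 from the reduction and absorb triple-edge blocks into the terminal classification---but as written the skeleton fails.

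The terminal classification is also where your sketch is thinnest, and your link trichotomy misfiles the multigraph cases. The all-$2K_2$ argument (each edge in exactly one triangle, triangles partition $E(G)$, hence $G=L(H)$ with $H$ cubic) is a pleasant alternative to the paper's appeal to Harary--Holzmann, but it only captures line multigraphs of cubic multigraphs \emph{without} parallel edges: if $H$ has a double edge, then in $L(H)$ some edges lie in two triangles and the corresponding links carry a doubled edge, so these graphs land in your ``denser configuration $\Rightarrow$ reversible gadget'' bucket---yet they contain no right-hand-side gadget (the $K_4^-$ with doubled diagonal is the \emph{left} side of Operation~4), so your claimed exhaustiveness fails there. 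Similarly, a $P_4$ link does not by itself propagate to a squared cycle ($L$ of the $3$-prism has $P_4$ links); this is harmless only because the gadget-free hypothesis is implicit in your setup, which is precisely the hypothesis under which the paper runs its Claim~3 propagation from an induced $K_4^-$. The substance of the paper is Claims 1--4 of Proposition~\ref{mainpart} (double edges form a matching; every double edge is the diagonal of an induced $K_4^-$; a simple induced $K_4^-$ forces a squared cycle; the twin/triple-edge reduction to a line graph and reconstruction of the cubic multigraph), and your proposal defers or misstates exactly this analysis, so it cannot be counted as a correct proof in its present form.
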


The remainder of the paper proves this theorem.

\section{Proof of the main theorem}

We start with some elementary observations that will be repeatedly used in what follows:

\begin{lemma}\label{noisolated}
A graph $G$ has the triangle property if and only if for every vertex $v \in V(G)$, the graph induced by the neighbourhood 
of $v$ contains no isolated vertices.
\end{lemma}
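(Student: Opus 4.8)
The plan is to prove the two implications separately, each by a direct argument that translates between an edge incident to a fixed vertex $v$ and the adjacency structure inside the subgraph $G[N(v)]$ induced by the neighbourhood of $v$.

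For the forward direction I would assume $G$ has the triangle property and fix an arbitrary vertex $v$ together with an arbitrary neighbour $u \in N(v)$; the goal is to exhibit a neighbour of $u$ that also lies in $N(v)$. Since $uv$ is an edge, the triangle property supplies a vertex $w$ such that $uv$, $vw$ and $wu$ are all edges of $G$. Because $w$ is adjacent to $v$ it lies in $N(v)$, and because $w$ is adjacent to $u$ the edge $wu$ survives in $G[N(v)]$; hence $u$ is not isolated there. As $u \in N(v)$ was arbitrary, $G[N(v)]$ has no isolated vertices, and as $v$ was arbitrary this holds at every vertex.

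For the converse I would assume the neighbourhood condition and take an arbitrary edge $e = uv$; the aim is to place $e$ in a triangle. Viewing $u$ as a vertex of the induced graph $G[N(v)]$, the hypothesis that this graph has no isolated vertices yields a vertex $w \in N(v)$ adjacent to $u$. Then $w$ is adjacent to both $v$ (as $w \in N(v)$) and $u$, so $\{uv, vw, wu\}$ is a triangle containing $e$, as required.

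The only point I would flag for care is the multigraph bookkeeping rather than any substantive obstacle: in each direction one must check that the three vertices $u$, $v$, $w$ are genuinely distinct, which follows from the absence of loops (so that no vertex is its own neighbour and the witness $w$ returned inside $G[N(v)]$ differs from both $u$ and $v$); and one should note that when $uv$ is a multiple edge the same witness $w$ certifies every parallel copy, since $\{(uv)_i, vw, wu\}$ is a triangle for each copy $(uv)_i$ of the edge. Beyond this, the lemma is a direct unwinding of the definitions.
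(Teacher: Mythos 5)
Your proof is correct and is essentially the same argument as the paper's, which compresses it into one contrapositive sentence (``if $w$ is isolated in $N(v)$, then $vw$ lies in no triangle, and conversely''). Your extra care about distinctness of $u,v,w$ and parallel edges is sound but not a substantive departure.
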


\begin{proof}
If $w$ is an isolated vertex in $N(v)$, then the edge $vw$ does not lie in a triangle and conversely.
\end{proof}

\begin{lemma}\label{twodegthree}
If $G$ is a connected quartic graph with the triangle property, and $H$ is a subgraph of $G$ such that every vertex of $H$ has degree $4$ other
than two non-adjacent vertices $v$ and $w$ of degree $3$, then $G = H + vw$.
\end{lemma}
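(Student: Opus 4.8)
The plan is to prove the result in two stages: first show that $H$ must be a \emph{spanning} subgraph of $G$, so that $V(H)=V(G)$, and then show that the only edge of $G$ missing from $H$ is $vw$. For the first stage, write $S=V(H)$ and analyse the edge cut $[S,\bar S]$ between $S$ and $\bar S := V(G)\setminus S$. Since $G$ is quartic, any vertex $u$ with $\deg_H(u)=4$ already uses all four of its $G$-edges inside $H$ and hence has no edge to $\bar S$; and each of $v,w$ has exactly one $G$-edge not in $H$, so contributes at most one edge to the cut. Thus $|[S,\bar S]|\le 2$, and moreover every cut edge emanates from $v$ or from $w$.

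The crux — and the step I expect to be the main obstacle — is ruling out $\bar S\neq\varnothing$ using the triangle property. If $\bar S\neq\varnothing$, then connectivity of $G$ forces $1\le|[S,\bar S]|\le 2$. I would argue that \emph{no} cut edge can lie in a triangle, which contradicts the triangle property. Take a cut edge $va$ with $v\in S$ and $a\in\bar S$. A triangle on $va$ requires a common neighbour $c$ of $v$ and $a$, but then one of the edges $vc$, $ac$ again crosses the cut, so it would have to be the \emph{other} cut edge. Checking the few possibilities rules each out: if $c\in S$ then $ac$ is a cut edge, forcing $c=w$ and hence requiring $vw\in E(G)$, while if $c\in\bar S$ then $vc$ is a cut edge, forcing $c=a$. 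The first is impossible because $v$ and $w$ are non-adjacent in $H$ and $v$'s only non-$H$ edge is the cut edge $va\neq vw$, so $vw\notin E(G)$; the second is absurd since $c\ne a$. Hence $va$ lies in no triangle, a contradiction, and therefore $\bar S=\varnothing$, i.e.\ $H$ is spanning.

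Once $V(H)=V(G)$, the conclusion follows from a short counting argument. Every vertex of $G$ has degree $4$, and in $H$ only $v$ and $w$ fall short of this, each by exactly one. Hence the set $E(G)\setminus E(H)$ of missing edges is incident only to $v$ and to $w$, contributing exactly one to the $H$-deficiency of each. The only edge with both endpoints in $\{v,w\}$ is $vw$, so $E(G)\setminus E(H)=\{vw\}$; that is, $G=H+vw$. The argument is insensitive to whether $G$ is simple or a multigraph, since $v$ and $w$ each admit only a single additional incidence. The one place demanding genuine care is the cut analysis of the middle stage, where the interaction between the triangle property and the at-most-two cut edges must be verified case by case.
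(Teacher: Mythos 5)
Your proof is correct and, at its core, is the same argument as the paper's: both hinge on the observation that every vertex of $H$ of degree $4$ is already saturated, so an edge from $v$ or $w$ leaving $V(H)$ can lie in no triangle, and connectivity plus a degree count then force the single missing edge to be $vw$. The paper compresses this into one local application of Lemma~\ref{noisolated} (an outside neighbour $x$ would be isolated in $N(v)$), while your cut analysis of $[S,\bar S]$ with the case split on the location of the common neighbour $c$, followed by the spanning-then-counting finish, is a more elaborate but equivalent packaging of that same idea.
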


\begin{proof}
Suppose that $v$ and $w$ are the two non-adjacent vertices of degree $3$ in $H$. If the fourth edge from $v$ leads to a vertex $x$ {\em outside} $H$, then $x$ is isolated in $N(v)$ (because all the other neighbours of $v$ already have full degree). Thus the fourth edge from $v$ must join $v$ and $w$ and then $H+vw$ is quartic and hence equal to $G$.
\end{proof}

\begin{theorem}
The class of connected quartic multigraphs with the triangle property is closed under Operations 1--5 and their reversals.
\end{theorem}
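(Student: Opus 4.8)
The plan is to verify, for each of the five operations \emph{and} for its reversal, that the three defining properties of the class---connectedness, $4$-regularity, and the triangle property---are all preserved. Since every operation replaces a bounded subgraph that meets the rest of $G$ only at a handful of named attachment vertices, the whole verification is local. Connectedness is immediate in each case: the replacement subgraph is itself connected and is glued to the remainder of $G$ at the attachment vertices, which (together with their external edges) are left untouched, so no component can be split off; the reversals preserve connectedness for the same reason. Regularity is a routine degree count carried out on each figure: every attachment vertex keeps degree $4$ (its incident edges are either unchanged or traded one-for-one for new ones), and every newly created internal vertex is checked to have degree exactly $4$. The real content is the triangle property.

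For the triangle property I would classify the edges of the new graph into three types. Type (i): edges lying entirely outside the replaced subgraph, which are untouched and whose triangles survive automatically. Type (ii): edges inside the new subgraph, which I would check one at a time directly from the figure---in every case the replacement is drawn so that each internal edge sits in an obvious small triangle (for instance, in Operation~$1$ the subdivision edge $xu$ lies in the triangle on $x$, $u$ and the midpoint $v$ of $zx$, and the new triangle $uvw$ covers its own edges). Type (iii), the delicate one, consists of the external edges incident to the attachment vertices. For Operations~$3$, $4$ and $5$, and for \emph{all} the reversals, type (iii) is dispatched by a single observation: in both the left and the right subgraph every non-attachment vertex is \emph{saturated} (already has degree $4$ inside the subgraph), so no external neighbour of an attachment vertex can be adjacent to an internal vertex; hence every triangle through an external edge lies wholly outside the subgraph and is unaffected by the replacement. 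This makes Operations~$3$--$5$ and the reversals of $1$--$5$ essentially symmetric local checks.

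The main obstacle is the forward direction of Operations~$1$ and $2$, where the attachment vertices $x,y,z$ are \emph{not} saturated by the removed triangle $T$, so an external edge at $x$ could in principle have relied on an edge of $T$ for its only triangle; this is exactly where the eligibility hypothesis must be used, and I would split into the two cases of the definition. If $T$ can be removed without destroying the triangle property, then every external edge at $x,y,z$ keeps a triangle in $G-T$; that triangle uses neither an edge of $T$ nor any new edge, so it survives into the operated graph, and together with the type-(ii) check this gives the triangle property. If instead some edge of $T$, say $xy$, lies in a triple edge, then removal of $T$ genuinely destroys the property (the surviving double edge $xy$ loses its triangle), so I would argue directly that the operation restores it: the new apex/midpoint vertex adjacent to both $x$ and $y$ recreates a triangle on the surviving copies of $xy$, while $z$ is then the only attachment vertex carrying external edges and its triangles are again external. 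Finally I would record by hand the two special configurations the text flags---the double-edged instance of Operation~$4$, which turns the squared $4$-cycle into the $5$-vertex graph of Figure~\ref{fig:5ex}, and the triple-edge block of Operation~$5$. Once this eligibility case analysis is in place, the remainder is bookkeeping and I expect no further difficulty.
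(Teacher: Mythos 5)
Your overall strategy matches the paper's proof---a purely local verification, with the eligibility hypothesis invoked for the forward direction of Operations 1 and 2, and the saturation of internal (unnamed) vertices dispatching external edges in all other cases---but your edge classification has a genuine gap: it does not account for what the paper calls \emph{optional edges}, namely edges of $G$ joining two attachment vertices but not belonging to the depicted pattern. Such an edge falls through your trichotomy. It is untouched by the replacement, so your type (i) reasoning declares its triangles ``survive automatically''; alternatively your type (iii) saturation argument concludes every triangle through it lies wholly outside the subgraph. Both claims can fail, because an edge joining two attachment vertices may form its only triangles \emph{through internal vertices} of the pattern. Concretely, consider reversing Operation 2: $G$ contains a $K_{1,1,3}$ with parts $\{u\},\{v\},\{x,y,z\}$, and suppose $G$ also has an edge $xy$ whose only triangles are $xuy$ and $xvy$. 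The reversal deletes $u$ and $v$, destroying both triangles, and nothing in your argument explains why $xy$ still lies in a triangle afterwards. (It does: the reinstated triangle on $x,y,z$ supplies the triangle $\{xy,yz,zx\}$.) The same issue arises for the reversal of Operation 1, where $x,y,z$ are pairwise nonadjacent in the right-hand configuration of Figure~\ref{fig:ops12} and may carry exactly such edges. Your forward direction is safe---in eligibility case 1 an optional edge is an edge of $G-T$ and keeps its triangle there, and in the triple-edge case the named vertices involved are saturated---but the reversals are not.

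The paper closes this case with one uniform observation that your proposal lacks: any edge of $G$ not shown in a pattern must join two \emph{named} vertices (every unnamed vertex is saturated inside the pattern), and in each of the ten configurations every pair of named vertices has a common neighbour within the pattern, so an optional edge joining them always acquires a triangle inside the new subgraph, on either side of every operation. Your ad hoc treatment of the double edge $xy$ in Operation 4 shows you noticed one instance of this phenomenon, but the category needs to be handled systematically rather than patched case by case. Once this distance-$2$ remark is added, your argument is complete and essentially identical to the paper's; your additional checks of connectedness and $4$-regularity are correct and are simply left implicit in the paper.
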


\begin{proof}
For each of the five operations, it is easy to check that every edge shown in either the left-hand or right-hand subgraph lies in a triangle
completely contained within the subgraph, and so the replacement in either direction does not create any ``bad'' edges not in triangles.  

However, it remains to show that none of the edges that are {\em removed} in the operations or their reversals are essential for creating triangles involving edges that are not shown, either ``optional edges'' with both end vertices inside the subgraph or edges connecting the subgraph to the rest of the graph. For all five operations, any optional edges must connect pairs of named vertices, and it is clear that every pair of named vertices is at distance $2$ in the subgraphs on both sides of each operation (thus forming the necessary triangle if the edge joining them was actually present).

Now consider edges connecting the subgraphs to the rest of the graph. For Operations 1 and 2 (forwards), the requirement that the triangle be eligible ensures that such edges lie in triangles using only edges that will not be removed. For Operations 1 and 2 (backwards) and the remaining operations in either direction, the named vertices are adjacent only to vertices of degree four, and so there can be no triangles
using an edge of the subgraph except those completely contained in the subgraph. 

\end{proof}

The following proposition is the heart of the proof, as it characterises those quartic graphs with the triangle property that have {\em not} arisen
as a consequence of applying Operations $1$--$4$ to a smaller graph.

\begin{propn}\label{mainpart}
Let $G$ be a connected quartic graph with the triangle property on at least $5$ vertices that 
contains none of the subgraphs on the right-hand sides of
Operations $1$--$4$. Then either:
\begin{enumerate}
\item $G$ is the square of a cycle of length at least $7$, or
\item $G$ is obtained from the line multigraph of a cubic multigraph by applications of Operation~$5$.
\end{enumerate}
\end{propn}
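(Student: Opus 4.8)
The plan is a local-to-global argument driven by a classification of vertex neighbourhoods, splitting into a ``squared-cycle'' branch and a ``line-multigraph'' branch according to the local triangle pattern. First I would classify the possible \emph{links}: for a vertex $v$, the graph induced on the four edge-ends at $v$, which by Lemma~\ref{noisolated} has no isolated vertex. In the simple case the candidates are $2K_2$, $P_4$, $C_4$, the star $K_{1,3}$, the paw, the diamond $K_4-e$ and $K_4$. The crucial point is that each of the last four contains a pair of adjacent vertices with three further common neighbours, which together with $v$ yields a copy of $K_{1,1,3}$ --- the right-hand side of Operation~2 --- and is therefore forbidden; in particular a $K_4$ link would force $G=K_5=C_5^2\supseteq K_{1,1,3}$. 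Hence every simple link is $2K_2$, $P_4$ or $C_4$. I would then extend this to multigraph links: a double edge at $v$, together with a triangle completing it, is absorbed into these same three types, whereas a double edge occurring in the ``wrong'' place exposes a right-hand side of Operation~3 or~4 and is excluded, and a triple edge at $v$ is recognised as a block produced by Operation~5.

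Next I would set up the dichotomy: either some link is a $P_4$, or every link is $2K_2$ or $C_4$. In the first case, with neighbours $a,b,c,d$ and edges $ab,bc,cd$, I would track the fourth neighbour of each interior vertex $b$ and $c$; Lemma~\ref{noisolated} forces this neighbour to be new and to make the links of $b$ and $c$ again paths, so the squared-cycle pattern propagates. Since $G$ is finite and connected, this closes into a single $C_n^2$. The degenerate lengths $n\le 6$ either create a $C_4$ link, or a copy of $K_{1,1,3}$, or coincide with a line multigraph (for instance $C_6^2=L(K_4)$), so a genuinely path-type graph has $n\ge 7$, giving alternative~(1).

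In the complementary case every vertex lies in exactly two triangles that partition its four incident edges (for a $C_4$ link one selects two opposite triangles), so these chosen triangles partition $E(G)$ with every vertex in exactly two parts. A Krausz-type reconstruction then produces a cubic multigraph $H$ --- its vertices the chosen triangles, its edges the vertices of $G$, each edge joining the two triangles through a given vertex of $G$ --- with $G=L(H)$; parallel edges of $H$ account for the double edges of $G$. Triple edges cannot occur in a line multigraph and are precisely the blocks introduced by Operation~5, so reversing each Operation~5 first yields a triple-edge-free graph that is a genuine line multigraph, giving alternative~(2).

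The hardest part, I expect, is twofold. The local classification becomes delicate once parallel edges are admitted: the bookkeeping for double and triple edges is intricate, and it is exactly here that the right-hand sides of Operations~1, 3 and~4 must be shown to be the only obstructions beyond $K_{1,1,3}$. The second difficulty is making the global step airtight --- proving that path-type and clean-type links cannot coexist across a connected graph, that the propagation genuinely closes into a cycle of length at least $7$ rather than degenerating, and that in the $C_4$-link case the matching of opposite triangles can be chosen consistently so that the Krausz reconstruction goes through.
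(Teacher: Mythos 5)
Your skeleton is close to the paper's in one branch and genuinely different in the other. Your ``$P_4$ link'' is exactly the paper's induced simple $K_4^-$ (the vertex $v$ together with three consecutive vertices of the path induces a diamond with $v$ on the diagonal), and your propagation is the paper's Claim~3 chain-of-diamonds argument; your classification of simple links via forbidden $K_{1,1,3}$'s is correct. The genuinely different ingredient is the line-graph branch: the paper deletes one twin from each double edge, reduces triple edges, observes that the result is claw-free and $K_4^-$-free, and invokes the Harary--Holzmann characterisation of line graphs of triangle-free graphs, whereas you propose a self-contained Krausz partition of $E(G)$ into triangles with each vertex in exactly two, followed by direct reconstruction of the cubic multigraph. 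That route is viable and arguably more elementary: once every link is $2K_2$, each edge lies in a \emph{unique} triangle, so the partition is forced and the consistency you worry about is automatic.

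As written, though, the proposal has two concrete holes, both at points you flag but do not resolve. First, in the propagation you assert that Lemma~\ref{noisolated} forces the new neighbours to keep the links paths; it does not. Lemma~\ref{noisolated} only forces each of the two new vertices ($w_3,w_4$ in the paper's notation) to attach to $v_1$ or $v_2$, and the bad case in which one of $v_1,v_2$ is adjacent to \emph{both} $w_3$ and $w_4$ is perfectly compatible with every link being a $P_4$ or a $C_4$; it is excluded only because it creates the right-hand side of Operation~1, which is precisely the step the paper's Claim~3 makes and your sketch omits. Second, the $C_4$-link case is left genuinely open: you offer no mechanism for choosing ``opposite triangles'' consistently, and your assertion that every vertex lies in exactly two triangles is false at a $C_4$-link vertex, which lies in four. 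The repair is to show the case is vacuous under the hypotheses: if no link is a $P_4$, a $C_4$ link at $v$ forces, vertex by vertex, the octahedron $C_6^2$, and the octahedron contains the right-hand side of Operation~1 (a triangle together with the three antipodal vertices), so it is excluded. Finally, the multigraph bookkeeping you defer is not a side issue but the substance of the paper's Claims~1 and~2 (double edges form a matching, and each is the diagonal of an induced $K_4^-$, via the forbidden subgraphs of Operations~3, 4 and~2), together with the short check that a triple edge forces exactly the pendant block of Operation~5; without these, your multiset links are not yet constrained enough for the dichotomy to be stated.
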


\begin{proof}
The proof proceeds via a series of claims progressively restricting the structure of $G$. 

\medskip
\noindent
{\bf Claim 1}: The double edges of $G$ form a matching. 

\medskip
Suppose for a contradiction that $uv$ and $vw$ are both double-edges. By Lemma~\ref{noisolated}, $u$ is adjacent to $w$ and because $G$ has
more than three vertices $u$ is adjacent to a fourth vertex $x$. By Lemma~\ref{noisolated}, the vertex $x$ is forced to be adjacent to $w$, thereby creating
the subgraph on the right-hand side of Operation 3. 

\medskip
\noindent
{\bf Claim 2}: Every double edge is the diagonal of an induced $K_4^-$ (the graph obtained by removing a single edge from $K_4$). 
\medskip

 Let $xy$ be a double edge, and let $v$ be a common neighbor of $x$ and $y$ (which must exist by Lemma~\ref{noisolated}).  Let $x_1$ and $y_1$ be the remaining neighbours of $x$ and $y$, respectively, and note that by Claim 1, $x_1 \ne v \ne y_1$.  If $x_1\ne y_1$, then $x_1v,y_1v\in E(G)$ to create the required triangles, which creates the right-hand graph of Operation $4$ which is a contradiction. Therefore we conclude that $x_1=y_1$ and denote this vertex by $w$.  If $vw \in E(G)$, then $v$ and $w$ must have an additional common neighbour, $z$. Therefore we have deduced the existence of the subgraph shown in Figure~\ref{fig:claim2} which clearly contains $K_{1,1,3}$ (the right-hand graph of Operation 2) which is again a contradiction.  Therefore $vw \notin E(G)$ and so $\{x,y,v,w\}$ form an induced $K_4^-$.

\begin{figure}

\begin{center}
\begin{tikzpicture}[bend angle = 15]
\tikzstyle{every node}=[circle, fill=gray, draw=black,inner sep=0.7mm];
\node[label=below left:$x$] (x) at (0,0) {};
\node[label=above left:$y$] (y) at (0,2) {};
\node[label=right:$z$] (z) at (2,1) {};
\node [label = above:$v$] (u) at (1,1.5) {};
\node  [label = below:$w$](v) at (1,0.5) {};
\draw[style=thick] (v)--(x)--(u)--(y)--(v)--(z)--(u)--(v);
\draw [style=thick, bend right] (x) to (y);
\draw [style=thick, bend left] (x) to (y);
\end{tikzpicture}
\end{center}
\caption{The right-hand graph of Operation 2 appears in Claim 2}
\label{fig:claim2}
\end{figure}
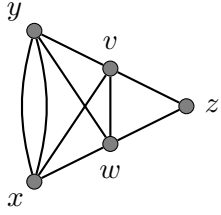

\medskip
\noindent
{\bf Claim 3}: If $G$ contains an induced $K_4^-$ with no multiple edges, then $G$ is a squared cycle of length at least $7$.

\medskip
%
%
Suppose that $G$ contains an induced $K_4^-$ on the vertices $\{v_1,v_2,v_3,v_4\}$ with all edges $v_iv_j\in E(G)$ except $v_1v_2$. 
Let $w_3,w_4$ be the remaining neighbors of $v_3$ and $v_4$, respectively. If $w_3=w_4$, then $G[\{v_1,v_2,v_3,v_4,w_3\}]$ contains a $K_{1,1,3}$, which is the graph on the right side of Operation~2, a contradiction. Thus we can assume that $w_3 \ne w_4$ obtaining
the first graph of Figure~\ref{fig:claim3}.
 
Now, to avoid $w_3$ being isolated in the neighbourhood of $v_3$, it must be adjacent to either $v_1$ or $v_2$, and similarly for $w_4$. However if either $v_1$ or $v_2$ is adjacent to {\em both of} $w_3$ and $w_4$, then this creates the graph on the right-hand side of Operation 1. 
Therefore, by symmetry we can assume that $v_1w_3,v_2w_4\in E(G)$ and $v_1w_4,v_2w_3\not\in E(G)$.  Continuing, we see that the vertices $v_1$ and $v_2$ must each have a fourth neighbour $w_1$ and $w_2$, respectively. To avoid $w_1$ being isolated in $N(v_1)$ we must have $w_1w_3 \in E(G)$ and to avoid $w_2$ being isolated in $N(v_2)$ we must have $w_2w_4 \in E(G)$. If $w_1 = w_2$ then we have the
situation of Lemma~\ref{twodegthree} and so $G$ is the square of the $7$-cycle. 

If $w_1 \ne w_2$, then we have arrived at the second
graph in Figure~\ref{fig:claim3} and we can continue this process.
Note that $w_3w_4\notin E(G)$ as $w_3$ and $w_4$ have no common neighbor amongst the previously named vertices.
So consider neighbours $x_3$ and $x_4$ of $w_3$ and $w_4$, respectively.  If $x_3=w_2$, then the final neighbour of $w_2$ is a neighbor of $w_3$ (to create a triangle for $w_2w_3$) and a neighbor of $w_4$ (to create a triangle for $w_4x_4$), and thus $x_4=w_1$, and $w_1w_2\in E(G)$. In this case, $G$  is the square of an $8$-cycle.

So assume that $x_3\ne w_2$ and $x_4\ne w_1$. If $x_3=x_4$, this forces $w_1w_2\in E(G)$, and $G$ is the square of a $9$-cycle. If $x_3\ne x_4$, we continue with vertices $x_1$ and $x_2$, and so on. At each stage of this process, there is a chain of $K_4^-$s and we consider the two missing neighbours of the two vertices of degree three. Either the two neighbours are both in the chain already, in which case $G$ is an even
squared cycle, or the two neighbours coincide in a single new vertex, in which case $G$ is an odd squared cycle, or they are two new vertices, in which case the chain is extended, and the argument repeated. Eventually this process must stop, producing a squared cycle.

\begin{figure}
\begin{center}
\begin{tikzpicture}[scale=0.7]
\tikzstyle{every node}=[circle, fill=gray, draw=black,inner sep=0.7mm];
\node [label = below:{\small $v_1$}] (v1) at (2,0) {};
\node [label = above:{\small $v_2$}] (v2) at (0,2) {};
\node [label = above:{\small $v_3$}] (v3) at (2,2) {};
\node [label = below:{\small $v_4$}] (v4) at (0,0) {};
\node [label = above:{\small $w_3$}] (w3) at (4,2) {};
\node [label = below:{\small $w_4$}] (w4) at (-2,0) {};
\draw [thick] (v1)--(v3)--(v2)--(v4)--(v1);
\draw [thick] (v4)--(w4);
\draw [thick] (v3)--(w3);
\draw [thick] (v3)--(v4);
\pgftransformxshift{9cm}
\node [label = below:{\small $v_1$}] (v1) at (2,0) {};
\node [label = above:{\small $v_2$}] (v2) at (0,2) {};
\node [label = above:{\small $v_3$}] (v3) at (2,2) {};
\node [label = below:{\small $v_4$}] (v4) at (0,0) {};
\node [label = above:{\small $w_3$}] (w3) at (4,2) {};
\node [label = below:{\small $w_4$}] (w4) at (-2,0) {};
\node [label = above:{\small $w_2$}] (w2) at (-2,2) {};
\node [label = below:{\small $w_1$}] (w1) at (4,0) {};
\draw [thick] (v1)--(v3)--(v2)--(v4)--(v1);
\draw [thick] (v4)--(w4);
\draw [thick] (v3)--(w3);
\draw [thick] (v3)--(v4);
\draw [thick] (v2)--(w4);
\draw [thick] (v1)--(w3);
\draw [thick] (v2)--(w2);
\draw [thick] (v1)--(w1);
\draw [thick] (w2)--(w4);
\draw [thick] (w1)--(w3);
\end{tikzpicture}
\end{center}
\caption{Two stages in the construction of Claim 3}
\label{fig:claim3}
\end{figure}
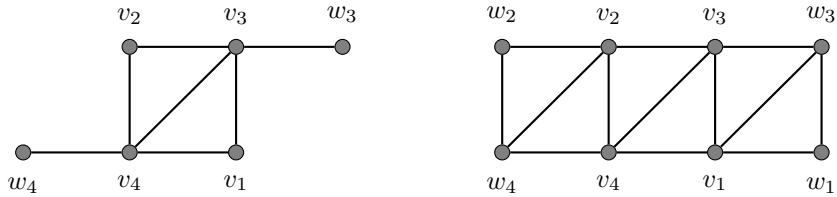

\medskip
\noindent
{\bf Claim 4}: If $G$ does not contain an induced $K_4^-$ with no multiple edges, then $G$ can be obtained from the line multigraph of a cubic multigraph by applications of Operation 6.

\medskip
Create a simple graph $S$ from $G$ as follows. For every double edge, delete one of the two end vertices; as these vertices have identical neighbourhoods (i.e., they are {\em twins}) it does not matter which one. For every triple edge, delete two of the three edges. This graph does not contain $K_{1,3}$ and $K_4^-$ as induced subgraphs, and so by Harary \& Holzmann \cite{HararyHolzmann} it is the linegraph of a unique triangle free graph $L^{-1}(F)$. Now $G$ can be reconstructed from $L^{-1}(F)$ as follows: double every edge in $L^{-1}(F)$ that corresponds to a vertex in $G$ whose twin was deleted. Add a double edge between the vertices of degree $1$ on the edges corresponding to the end vertices of each triple edge in $G$. This forms a cubic multigraph, and we can construct $G$ by taking the line multigraph of this graph and performing Operation~$5$ to recover the triple edges. 
\end{proof}

With these results, we are now in a position to prove the main theorem.

\begin{proof} (of Theorem~\ref{mainthm})
Suppose that $G$ is a quartic graph with the triangle property, and repeatedly perform the reverse of Operations 1--4 until the resulting graph $G'$ has no subgraphs isomorphic to any of the graphs on the right-hand side of Operations 1--4. If $G'$ has at least $5$ vertices, then by Proposition~\ref{mainpart}, it is either a squared $n$-cycle for $n \geq 7$ or has been obtained from the line multigraph of a cubic multigraph by applications of Operation 5. In the former case, $G$ itself is equal to the squared $n$-cycle, because for $n \geq 7$, the squared $n$-cycles contain no eligible triangles. In the latter case, combining the applications of Operation 5 that
transform the line multigraph of a cubic multigraph into $G'$ with the applications of Operations 1--4 that transform $G'$ into $G$ shows that 
$G$ has the required structure. If $G'$ has fewer than $5$ vertices then it is either the squared $3$-cycle or the squared $4$-cycle. The squared $3$-cycle is the linegraph of a triple edge, while the only operation that can be applied to the squared $4$-cycle is Operation 4 which creates the graph of Figure~\ref{fig:5ex} (to which no further operations can be applied.) \end{proof}

{\bf Remark:} Some graphs appear in more than one of the classes of Theorem~\ref{mainthm}. In particular, the squared $3$-cycle is also the line graph of a triple edge, the squared $5$-cycle (that is, $K_5$) is obtained by applying Operation 2 to the squared $3$-cycle, while the squared $6$-cycle is obtained by applying Operation 1 to the squared $3$-cycle.

\bigskip

With some more work, we can get a more precise result characterising the {\em simple} quartic graphs with the triangle property. First, a preliminary lemma; this lemma refers to the two graphs in Figure~\ref{fig:blocks} which are each obtained from the graph on the left-hand side of Operation
5 (Figure~\ref{fig:op5}) by performing Operation 1 on one of the two inequivalent  choices of eligible triangle.

\begin{figure}
\begin{tikzpicture}[scale=1]
\tikzstyle{every node}=[circle, fill=gray, draw=black,inner sep=0.7mm];
\node (a) at (0,1) {};
\node (b) at (1,0) {};
\node (c) at (1,2) {};
\node (d) at (2,1) {};
\node (e) at (3.5,1) {};
\draw [thick] (a)--(b)--(c)--(a);
\draw [thick] (c)--(d)-- node (z) {} (b);
\draw [thick, bend right] (d) to node (x) {} (e);
\draw [thick, bend left] (d) to (e);
\draw [thick, bend left] (c) to (e);
\draw [thick, bend right] (b) to node (y) {} (e);
\draw [thick] (x)--(y)--(z)--(x);
\pgftransformxshift{5cm}
\node (a) at (0,1) {};
\node (b) at (1,0) {};
\node (c) at (1,2) {};
\node (d) at (2,1) {};
\node (e) at (3.5,1) {};
\draw [thick] (a)--(b)--(c)--(a);
\draw [thick] (c)--(d)--(b);
\draw [thick, bend right] (d) to (e);
\draw [thick, bend left] (d) to (e);
\draw [thick, bend left] (c) to (e);
\draw [thick, bend right] (b) to (e);
\node (x) at (0.5,0.5) {};
\node (y) at (0.5,1.5) {};
\node (z) at (1,1) {};
\draw [thick] (x)--(y)--(z)--(x);
\end{tikzpicture}
\caption{Applying Operation 1 to the eligible triangles in the left-hand graph of Operation 5}
\label{fig:blocks}
\end{figure}
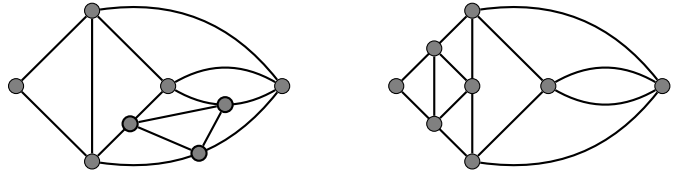

\begin{lemma}
Suppose that $G$ is a quartic graph with the triangle property and that $B$ is a block of $G$ that contains no $K_{1,1,3}$. Applying Operation 1
in reverse to a configuration contained in $B$ creates a $K_{1,1,3}$ in $B$ if and only if $B$ is one of the two blocks shown in 
Figure~\ref{fig:blocks}.
\end{lemma}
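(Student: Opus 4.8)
The plan is to prove the equivalence in two directions, with the reverse implication being a short verification and the forward implication carrying essentially all of the work. Throughout, write $B_0$ for the left-hand graph of Operation~5 (Figure~\ref{fig:op5}), and recall that a $K_{1,1,3}$ in a graph is precisely a pair of adjacent vertices $s,t$ with three common neighbours (a ``spine'' $st$ with three ``leaves''), so that in a quartic graph $N(s)\setminus\{t\}=N(t)\setminus\{s\}$ has size three.

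For the easy direction, suppose $B$ is one of the two blocks of Figure~\ref{fig:blocks}. Each is by construction obtained from $B_0$ by applying Operation~1 to an eligible triangle, so reversing that same application returns $B_0$. Now $B_0$ itself contains a $K_{1,1,3}$: its two degree-$4$ vertices adjacent to the (degree-two) attachment vertex form a spine whose three common neighbours are the attachment vertex together with the two endpoints of the double edge. Hence the reversal creates a $K_{1,1,3}$. A finite check then confirms that neither block itself contains a $K_{1,1,3}$ (applying Operation~1 subdivides an edge of every triangle of $B_0$, and in particular destroys the spine or two pages of this book), so the creation is genuine and the hypothesis of the lemma is met.

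For the forward direction, suppose the reversal is applied to a configuration $C\subseteq B$ with corners $x,y,z$, deleting the three central vertices and restoring the corner triangle $xy,yz,zx$, and that this creates a $K_{1,1,3}$. The first step is to locate the new book. Since the central vertices of $C$ have all four neighbours inside $C$, no vertex outside $C$ is adjacent to a central vertex; consequently a spine vertex lying outside $\{x,y,z\}$ would have full degree and unchanged neighbourhood under the reversal, and the whole book would already be present in $B$. As $B$ contains no $K_{1,1,3}$, at least one spine vertex is a corner, say $x$, with $N(x)=\{y,z\}\cup\{p,q\}$ for two external neighbours $p,q$ after the reversal. The proof now splits according to whether the second spine vertex is another corner or is external. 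In the \emph{corner case}, the spine is $\{x,y\}$, the third corner $z$ is a leaf, and $x,y$ share both external neighbours $p,q$. Applying the triangle property (Lemma~\ref{noisolated}) to the edges $xp,xq,yp,yq$ forces $p\sim q$, and the absence of a $K_{1,1,3}$ in $B$ forces $pq$ to be a double edge (otherwise $p$ and $q$ would acquire a common fourth neighbour $r$, giving a book with spine $pq$ and leaves $x,y,r$ already present before the reversal). Now $x,y,p,q$ have full degree inside $\{x,y,z,p,q\}$, so $z$ is the only vertex with edges leaving this set; the triangle property forces its two remaining neighbours to be mutually adjacent, making $z$ a cut vertex. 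Thus $B$ is exactly this five-vertex block, which is $B_0$ expanded at the triangle on the corners --- the right-hand block of Figure~\ref{fig:blocks}.

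In the \emph{external case}, the spine is $\{x,s\}$ with $s$ outside $\{x,y,z\}$, and the condition $N(x)\setminus\{s\}=N(s)\setminus\{x\}=\{y,z,a\}$ makes $y,z$ corner leaves, $a$ an external leaf, and $s$ adjacent to all three corners and to $a$; so $\{x,s\}$ and $\{y,z,a\}$ form the book. The remaining task is to pin down the fourth neighbour of each corner leaf. Using the triangle property together with the fact that $x$ and $s$ are already full, I will show that the fourth neighbour of $y$ cannot be $a$ (this forces an impossible fourth neighbour at $z$) and cannot be a genuinely new vertex (such a vertex, once adjacent to a leaf, propagates the fullness to $y$ and $z$ and then has no way to complete its own triangles), leaving only that $y$ and $z$ are joined by a double edge. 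As before, $x,s,y,z$ are then full and $a$ is the unique deficient vertex, forced by the triangle property to be a cut vertex, so $B$ is $B_0$ expanded at the triangle carrying the double edge --- the left-hand block of Figure~\ref{fig:blocks}. The main obstacle is precisely this external case: determining the fourth neighbours of the corner leaves requires a careful multi-branch argument in which the full-degree vertices $x$ and $s$ act as walls confining $B$ to the five named vertices, and each alternative must be eliminated via the triangle property and the no-$K_{1,1,3}$ hypothesis. Once this confinement is established, the identification of $B$ with one of the two blocks, and the matching of the two cases with the two inequivalent eligible triangles of $B_0$, is immediate.
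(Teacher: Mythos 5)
Your proposal is correct and follows essentially the same route as the paper: your dichotomy on where the spine of the new $K_{1,1,3}$ sits (two corners versus one corner and one external vertex) is exactly the paper's split $|T\cap E(K)|=3$ versus $|T\cap E(K)|=2$, justified by the same observation that spine vertices have all incident edges in the book, and your subsequent forcing arguments (central vertices as ``walls'', Lemma~\ref{noisolated} forcing $p\sim q$, the $K_{1,1,3}$-free hypothesis forcing the double edge or the single $yz$-edge, with the $w_y=a$ branch killed as in the paper's appeal to Lemma~\ref{twodegthree}) recover the paper's two configurations. Two cosmetic slips do not affect correctness: ruling out an external spine pair requires \emph{both} spine vertices outside $\{x,y,z\}$ (which is what your conclusion actually uses), and in the external case $yz$ is a single edge of $B$ that only becomes double after the reversal, consistent with the left-hand block of Figure~\ref{fig:blocks}.
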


\begin{proof}
The ``if'' direction is obvious, so we prove only the ``only if''. So, suppose 
that the new triangle $T$ created by performing Operation 1 in reverse creates (part of) a subgraph 
$K$ isomorphic to  $K_{1,1,3}$. 
First we note that $|T \cap E(K)| \not= 1$ because every edge of $K$ is incident to a vertex of degree $4$. Therefore,  $|T \cap E(K)| \in \{2,3\}$ and, without loss of generality, $B$ contains one of the configurations shown in Figure~\ref{fig:op1rev}.

Consider first the case when $|T \cap E(K)| = 2$, in which case $B$ contains the first configuration shown in Figure~\ref{fig:op1rev}. Now the fourth
neighbour of $y$ must be adjacent to $v$ and so is either $x$ or $z$. If $xy \in E(G)$, then by Lemma~\ref{twodegthree}, $xz \in E(G)$ and $G$ is an $8$-vertex graph, which contains a $K_{1,1,3}$ with $xv$ as the edge joining the two vertices of degree $4$, contradicting the hypothesis.
 So $yz \in E(G)$, which yields the first configuration shown in Figure~\ref{fig:blocks}.

Next consider the case when $|T \cap E(K)| = 3$, in which case $B$ contains the second configuration shown in Figure~\ref{fig:op1rev}. By Lemma~\ref{noisolated}, it follows that $xy \in E(G)$. If $xz \in E(G)$, then $yz \in E(G)$ which creates a $K_{1,1,3}$ with $xy$ as the edge joining the two vertices of degree four, contradicting the hypothesis. Otherwise, either $x$ and $y$ have a new mutual neighbour, contradicting the
fact that $B$ is $K_{1,1,3}$-free, or $xy$ is a double-edge, in which case we have the second configuration
shown in Figure~\ref{fig:blocks}.
\end{proof}

\begin{figure}
\begin{tikzpicture}[scale=1.4]
\tikzstyle{every node}=[circle, fill=gray, draw=black,inner sep=0.7mm];
\node[label=below left:$x$] (x) at (0,0) {};
\node[label=above left:$y$] (y) at (0,2) {};
\node[label=right:$z$] (z) at (2,1) {};
\node[label=above:$u$] (u) at (1,1.4) {};
\node [label=below:$v$] (v) at (1,0.6) {};
\draw[style=thick] (v)--(x)--(u)--(y)--(v)--(z)--(u)--(v);
\node (a) at (0.5,1.7) {};
\node (b) at (1.5,1.2) {};
\node (c) at (1.4,2.3) {};
\draw[style=thick, bend left, out=60,in=120](a) to (b);
\draw[style = thick, bend right] (b) to (c);
\draw[style = thick, bend right] (c) to (a);
\draw [style=thick, bend left] (y) to (c);
\draw [style=thick, bend left] (c) to (z);
\pgftransformxshift{4cm}
\pgftransformyshift{0.1cm}
\node[label=below left:$x$] (x) at (0,0) {};
\node[label=above left:$y$] (y) at (0,2) {};
\node[label=right:$z$] (z) at (2,1) {};
\node [label=above:$u$] (u) at (1,1.6) {};
\node [label=below:$v$] (v) at (1,0.4) {};
\draw[style=thick] (v)--(x)--(u)--(y)--(v)--(z)--(u)--(v);
\node (a) at (1.5,1.3) {};
\node (b) at (1.5,0.7) {};
\node (c) at (1,1) {};
\draw[style=thick] (a)--(b)--(c)--(a);
\end{tikzpicture}
\caption{Reversing Operation 1 to create a $K_{1,1,3}$ either creates the edges $\{yu, uz\}$ or the entire triangle $\{uv, vz, zu\}$.}
\label{fig:op1rev}
\end{figure}
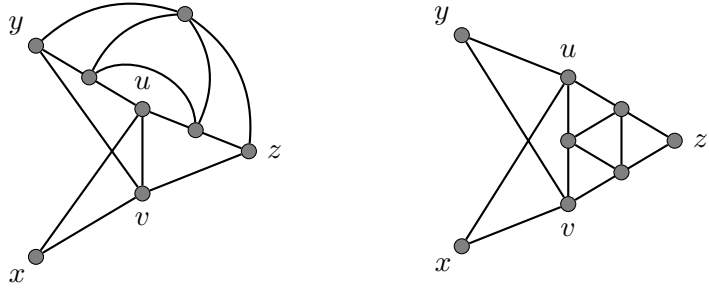

\begin{cor}
If $G$ is a simple connected quartic graph with the triangle property on at least $5$ vertices, then either:
\begin{enumerate}
\item $G$ is the square of a cycle of length at least $5$, or
\item $G$ is obtained from the line multigraph of a cubic multigraph by repeatedly replacing triangles by copies of $K_{1,1,3}$, 
ensuring that all multiple edges are eliminated.
\end{enumerate}
\end{cor}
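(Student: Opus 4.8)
The plan is to reduce the problem to Theorem~\ref{mainthm} and then use simplicity to prune the list of operations down to Operation~$2$ alone. First I would apply Theorem~\ref{mainthm} to $G$. If $G$ is a squared cycle then, since the only non-simple squared cycles are $C_3^2$ and $C_4^2$ (on $3$ and $4$ vertices), a simple $G$ on at least $5$ vertices must be $C_n^2$ with $n \geq 5$, giving conclusion~(1). The $5$-vertex multigraph of Figure~\ref{fig:5ex} carries double edges, so it is excluded. This leaves the case where $G$ arises from the line multigraph $L(H)$ of a cubic multigraph by a sequence of Operations~$1$--$5$, and the whole content of the corollary is to show that for a simple $G$ this sequence may be taken to consist only of Operation~$2$ (triangle $\to K_{1,1,3}$).

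To do this I would run the reduction directly by reversing Operation~$2$. The first observation is that in a quartic graph \emph{every} copy of $K_{1,1,3}$ is reducible: its two ``hub'' vertices are each adjacent to the other four vertices of the $K_{1,1,3}$, so they already have degree $4$ and have no neighbours outside it; hence reverse Operation~$2$ (delete the two hubs, add the triangle on the three remaining vertices) always applies. Each such step removes two vertices, so after finitely many steps we reach a graph $G'$, still quartic and with the triangle property since the class is closed under these operations, that contains no $K_{1,1,3}$ at all. Because $G$ is simple, every multiple edge of $G'$ was created along the way, and recovering $G$ from $G'$ is exactly a sequence of forward Operation~$2$'s that eliminates these multiplicities (as in the passage from $C_3^2$ to $K_5$).

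It remains to identify $G'$, and here I would invoke the analysis inside Proposition~\ref{mainpart}. Since $G'$ already avoids the right-hand side of Operation~$2$, what must be checked is that $G'$ avoids the right-hand sides of Operations~$1$, $3$ and~$4$, so that Claims~$3$ and~$4$ force $G'$ to be a squared cycle (conclusion~(1)) or a line multigraph. The configurations created by Operations~$3$ and~$4$ contain multiple edges, and the matching/$K_4^-$ structure of Claims~$1$ and~$2$ constrains where these can sit; the genuinely new ingredient is Operation~$1$, whose right-hand side is simple and could a priori occur in $G'$. This is exactly what the block lemma above is for: in a $K_{1,1,3}$-free block, reversing Operation~$1$ produces a new $K_{1,1,3}$ only for the two exceptional blocks of Figure~\ref{fig:blocks}, both of which carry multiple edges. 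I would use this to argue that no essential Operation~$1$ is needed, since any Operation~$1$ right-hand side in $G'$ is either part of the line-multigraph structure (and hence absorbed into the choice of $H$) or sits inside one of the exceptional blocks, which are tied to triple edges.

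The last point to settle, and the step I expect to be the main obstacle, is the elimination of Operation~$5$: Proposition~\ref{mainpart} only yields ``$L(H)$ together with applications of Operation~$5$,'' and Operation~$5$ creates triple edges, whereas the corollary permits only Operation~$2$ from $L(H)$. The key structural fact to exploit is that a line multigraph has edge-multiplicity at most $2$ (two edges of $H$ meet in at most two vertices), so genuine triple edges never occur in $L(H)$ itself, and a triple edge in $G'$ could only have been manufactured by Operation~$5$. I would therefore show that for a \emph{simple} $G$ the reverse-Operation-$2$ reduction never forces a triple edge --- equivalently, that any block carrying a triple edge already reduces through a $K_{1,1,3}$ --- so that $G'$ has multiplicity at most $2$ and is literally $L(H)$ for a cubic multigraph $H$. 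Combined with the reduction, this exhibits $G$ as obtained from $L(H)$ by repeatedly replacing triangles with copies of $K_{1,1,3}$ and eliminating all multiple edges, which is conclusion~(2). The delicate bookkeeping lies precisely in these last two paragraphs: ensuring that neither Operation~$1$ nor Operation~$5$ is ever genuinely required, for which the block lemma and the multiplicity bound are the essential levers.
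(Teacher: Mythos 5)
Your outline (exhaust the reverse of Operation~2 first, use the block lemma to control the $K_{1,1,3}$'s that reversing Operation~1 can create, identify the reduced graph via Proposition~\ref{mainpart}, and absorb the Operation~1 reversals into the choice of the cubic multigraph) is exactly the paper's strategy, so most of your plan is sound. But the step you yourself flag as the main obstacle rests on a claim that is false: you propose to show that for simple $G$ the reverse-Operation-2 reduction never creates a triple edge, so that the reduced graph $G'$ has multiplicity at most~$2$ and is literally $L(H)$. In fact the left-hand graph of Operation~5 (Figure~\ref{fig:op5}) contains a $K_{1,1,3}$ whose two hub vertices are its two degree-$4$ vertices, and reversing Operation~2 on that $K_{1,1,3}$ is precisely Operation~5: it produces the triple edge. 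Moreover this block genuinely occurs in reductions of simple graphs: it is a block of $L(M)$ whenever the cubic multigraph $M$ has a double edge $PQ$ whose endpoints have a common neighbour $T$. Concretely, take $M$ with vertices $P,Q,T,P',Q',T'$, double edges $PQ$ and $P'Q'$, edges $PT,QT,P'T',Q'T'$ and $TT'$; then $L(M)$ is two copies of the left graph of Operation~5 sharing the attachment vertex, and applying Operation~2 twice inside each copy yields a simple quartic $17$-vertex graph $G$ with the triangle property. Running your reduction on this $G$, every maximal sequence of reverse Operation~2's is forced through the left graph of Operation~5 and strands at two triple-edge blocks (which contain no $K_{1,1,3}$, so the reduction cannot continue --- note your parenthetical ``any block carrying a triple edge already reduces through a $K_{1,1,3}$'' is backwards for exactly this reason). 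So $G'$ has triple edges and is not a line multigraph of anything.

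The missing idea, which is how the paper closes this hole, is to \emph{accept} the triple-edge blocks rather than rule them out: Proposition~\ref{mainpart} delivers ``$L(M)$ plus applications of Operation~5,'' and one then shows that each triple-edge block of $G'$ is cancelled by the first forward Operation~2 applied to it in the reconstruction of $G$ (that application necessarily hits the unique triangle through the triple edge, since $G$ is simple and only Operation~2 can destroy it, and it recreates the left graph of Operation~5, which is an honest line-multigraph block). Re-choosing the cubic multigraph accordingly exhibits $G$ as Operation~2's applied to a genuine $L(H)$. A secondary soft spot: your appeal to Claims~1 and~2 to argue that $G'$ avoids the right-hand sides of Operations~3 and~4 is circular, since those claims are proved under the hypothesis that the graph already avoids the right-hand sides of Operations~1--4; the paper instead explicitly performs the reverse Operation~1 reductions and the Operation~5 replacements to manufacture a graph $G_3$ to which the Proposition applies as a whole.
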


\begin{proof}

Suppose that $G$ is not the square of a cycle of length $7$ or more. Let $G_1$ be a $K_{1,1,3}$-free graph obtained from $G$ by 
performing the reverse of Operation 2 until it can no longer be performed, and then let $G_2$ be a graph obtained from $G_1$ by performing the reverse of Operation 1 as many times as possible. By the previous lemma, $G_2$ may now have some $K_{1,1,3}$-subgraphs, but
only in blocks isomorphic to the left-hand graph of Operation 5. Finally, let $G_3$ be the graph obtained from $G_2$ by applying Operation 5 to each of these blocks (thereby creating some triple edges).  Our aim is to show that $G_1$ is actually the line multigraph graph of a cubic multigraph.

Now $G_3$ has no subgraphs isomorphic to the right-hand graphs of Operations 1--4, and 
so it satisfies the hypotheses of Proposition~\ref{mainpart}. As it cannot be the square of a cycle of length at least $7$, it is therefore equal to the 
line multigraph $L(M)$ of a cubic multigraph $M$, possibly with some applications of Operation 5 creating all the blocks consisting of a triangle with a triple edge.  However, as all of these blocks in $G_3$ originally arose by applying Operation 5 to the relevant blocks of $G_2$, it follows that $G_2$ itself is isomorphic to $L(M)$. 

Finally, we note that performing Operation 1 on the line multigraph $L(M)$ of a cubic multigraph is the same as replacing a vertex of $M$ by a triangle, and {\em then} taking the line multigraph, and so $G_1$ is itself the line multigraph of a cubic multigraph.  Finally, $G$ is obtained from $G_1$ by repeated application of Operation 2, which is the replacing of triangles with copies of $K_{1,1,3}$.  As $G$ is simple, this replacement must eliminate all multiple edges.
\end{proof}

{\bf Remarks:} 

\smallskip

\begin{enumerate}
\item From the main theorem, we can immediately see that $G$ is obtained from the line multigraph of a cubic multigraph by the application of 
{\em some} sequence of Operations 1 and 2. The point of this corollary is to show that all the applications of Operation 2 can be moved to the {\em end} of the sequence, and all the applications of Operation 1 can be subsumed into the selection of the cubic multigraph. The key idea of the proof
is very simple, with all the awkward detail just dealing with the blocks of Figure~\ref{fig:blocks}.

\item It is {\em not} possible to construct all the simple quartic graphs with the triangle property starting only with the line graphs of {\em simple} cubic graphs, and so venturing into multigraphs is necessary even to get the result for simple graphs.
\end{enumerate}

\section{Conclusion}

It is natural to ask whether a similar result can be obtained for $5$-regular graphs --- in fact, determining the $5$-regular graphs with
the triangle property arose in the context of a different problem, and was the problem that originally motivated this research. However, computer investigations reveal that the number of quintic graphs with the triangle property grows rather rapidly, even when 
restricted to simple graphs, and it seems that a result similar to the one for quartic graphs in this paper would be extremely complicated.

For example, Figure~\ref{fig:family1} and Figure~\ref{fig:family2} give examples of $5$-regular graphs with the triangle property each of which can be extended in an obvious way to form an infinite family.

\begin{figure}
\begin{tikzpicture}[scale=0.8]
\tikzstyle{every node}=[circle, fill=gray, draw=black,inner sep=0.7mm];
\node (v0) at (0:1cm) {};
\node (v1) at (60:1cm) {};
\node (v2) at (120:1cm) {};
\node (v3) at (180:1cm) {};
\node (v4) at (240:1cm) {};
\node (v5) at (300:1cm) {};
\node (w0) at (0:2cm) {};
\node (w1) at (60:2cm) {};
\node (w2) at (120:2cm) {};
\node (w3) at (180:2cm) {};
\node (w4) at (240:2cm) {};
\node (w5) at (300:2cm) {};
\draw [thick] (v0)--(v1)--(v2)--(v3)--(v4)--(v5)--(v0);
\draw [thick] (w0)--(w1)--(w2)--(w3)--(w4)--(w5)--(w0);
\draw [thick] (w0)--(v1)--(w2)--(v3)--(w4)--(v5)--(w0);
\draw [thick] (v0)--(w1)--(v2)--(w3)--(v4)--(w5)--(v0);
\draw [thick] (v0)--(w0);
\draw [thick] (v1)--(w1);
\draw [thick] (v2)--(w2);
\draw [thick] (v3)--(w3);
\draw [thick] (v4)--(w4);
\draw [thick] (v5)--(w5);
\end{tikzpicture}
\caption{The lexicographic product $C_n[K_2]$, shown here for $n=6$}
\label{fig:family1}
\end{figure}
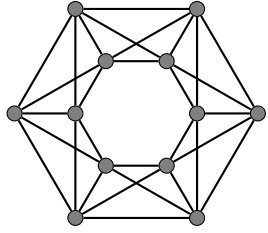

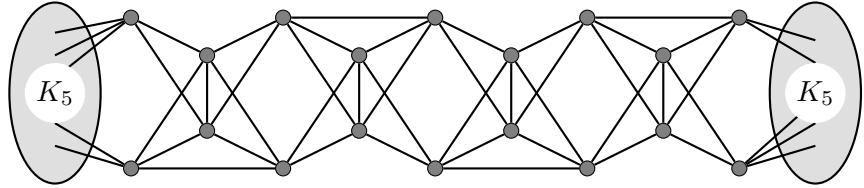
\begin{figure}
\begin{tikzpicture}
\tikzstyle{every node}=[circle, fill=gray, draw=black,inner sep=0.7mm];
\node (v0) at (0,2) {};
\node (v1) at (0,0) {};
\node (v2) at (1,1.5) {};
\node (v3) at (1,0.5) {};
\node (v4) at (2,2) {};
\node (v8) at (4,2) {};
\node (v12) at (6,2) {};
\node (v16) at (8,2) {};
\node (v5) at (2,0) {};
\node (v9) at (4,0) {};
\node (v13) at (6,0) {};
\node (v17) at (8,0) {};
\node (v6) at (3,0.5) {};
\node (v7) at (3,1.5) {};
\node (v10) at (5,1.5) {};
\node (v11) at (5,0.5) {};
\node (v14) at (7,1.5) {};
\node (v15) at (7,0.5) {};

\draw [thick] (v0)--(v2);
\draw [thick] (v0)--(v3);
\draw [thick] (v1)--(v2);
\draw [thick] (v1)--(v3);
\draw [thick] (v1)--(v5);
\draw [thick] (v2)--(v3);
\draw [thick] (v2)--(v4);
\draw [thick] (v2)--(v5);
\draw [thick] (v3)--(v4);
\draw [thick] (v3)--(v5);
\draw [thick] (v4)--(v6);
\draw [thick] (v4)--(v7);
\draw [thick] (v4)--(v8);
\draw [thick] (v5)--(v6);
\draw [thick] (v5)--(v7);
\draw [thick] (v6)--(v7);
\draw [thick] (v6)--(v8);
\draw [thick] (v6)--(v9);
\draw [thick] (v7)--(v8);
\draw [thick] (v7)--(v9);
\draw [thick] (v8)--(v10);
\draw [thick] (v8)--(v11);
\draw [thick] (v9)--(v10);
\draw [thick] (v9)--(v11);
\draw [thick] (v9)--(v13);
\draw [thick] (v10)--(v11);
\draw [thick] (v10)--(v12);
\draw [thick] (v10)--(v13);
\draw [thick] (v11)--(v12);
\draw [thick] (v11)--(v13);
\draw [thick] (v12)--(v14);
\draw [thick] (v12)--(v15);
\draw [thick] (v12)--(v16);
\draw [thick] (v13)--(v14);
\draw [thick] (v13)--(v15);
\draw [thick] (v14)--(v15);
\draw [thick] (v14)--(v16);
\draw [thick] (v14)--(v17);
\draw [thick] (v15)--(v16);
\draw [thick] (v15)--(v17);
\draw [thick,fill=gray!25!white] (-1,1) ellipse (0.6cm and 1.2cm);
\draw [thick] (-1,0.3) -- (v1);
\draw [thick] (-1,0.6) --(v1);
\draw[thick]  (-1,1.2) -- (v0);
\draw [thick] (-1,1.8) --(v0);
\draw[thick]  (-1,1.5) --(v0);
\node [fill=white,draw=white] at (-1,1) {$K_5$};
\draw [thick,fill=gray!25!white] (9,1) ellipse (0.6cm and 1.2cm);
\draw[thick]  (9,0.3) -- (v17);
\draw[thick]  (9,0.6) -- (v17);
\draw [thick] (9,0.9) -- (v17);
\draw[thick]  (9,1.7) --(v16);
\draw [thick] (9,1.4) --(v16);
\node [fill=white,draw=white] at (9,1) {$K_5$};

\end{tikzpicture}
\caption{An early member of a family of $5$-regular graphs with the triangle property}
\label{fig:family2}
\end{figure}

\end{document}